\newtheorem{theorem}{Theorem}
\newtheorem*{theorem*}{Theorem}
\newtheorem{proposition}[theorem]{Proposition}
\newtheorem{prediction}{Prediction}
\newtheorem{assumption}{Simplifying Assumption}
\newtheorem{definition}[theorem]{Definition}
\newtheorem{remark}{Remark}
\newtheorem*{result}{Result of computation}
\newtheorem{conjecture}[theorem]{Conjecture}
\Crefname{conjecture}{Conjecture}{Conjectures}
\theoremstyle{plain}
\theoremstyle{plain}
\numberwithin{equation}{section}
\numberwithin{theorem}{section}
\numberwithin{remark}{section}
\author[Botkin, Dawsey, Hemmer, Just and Schneider]{Aidan Botkin, Madeline L. Dawsey, David J. Hemmer,\\ Matthew R. Just and Robert Schneider}
\address{Department of Mathematical Sciences\newline
Michigan Technological University\newline
Houghton, Michigan 49931, U.S.A.}
\email{abotkin@mtu.edu}
\address{Department of Mathematics\newline
University of Texas at Tyler\newline Tyler, Texas 75799, U.S.A.}
\email{mdawsey@uttyler.edu}
\address{Department of Mathematical Sciences\newline
Michigan Technological University\newline
Houghton, Michigan 49931, U.S.A.}
\email{djhemmer@mtu.edu}
\address{Department of Mathematics\newline University of Georgia\newline
Athens, Georgia 30602, U.S.A.}
\email{justmatt@uga.edu}
\address{Department of Mathematical Sciences\newline
Michigan Technological University\newline
Houghton, Michigan 49931, U.S.A.}
\email{robertsc@mtu.edu}
\title{Partition-theoretic model of prime distribution}
\begin{document}

\begin{abstract}
We make an application of ideas from partition  theory to a problem in multiplicative number theory. We propose a deterministic model of prime number distribution, from first principles related to properties of integer partitions,  that naturally predicts the prime number theorem as well as the twin prime conjecture.  The model posits that, for $n\geq 2$,     $$p_{n}\  =\  1\  +\  2\sum_{j=1}^{n-1}\left\lceil \frac{d(j)}{2}\right\rceil\  +\   \varepsilon(n),$$
 where $p_k$ is the $k$th prime number, $d(k)$ is the divisor function, and $\varepsilon(k)$ is an explicit error term that is negligible asymptotically; both the main term and error term   represent enumerative functions in our conceptual model. We refine the error term to give  numerical estimates of $\pi(n)$    similar to those provided by the logarithmic integral, and much more accurate than $\operatorname{li}(n)$ up to  $n=10{,}000$ where the estimates are {\it almost exact}. 
 We then perform  computational tests of     unusual predictions of the model, finding  limited  evidence of predictable variations in prime gaps. 
\end{abstract}

\maketitle
\thispagestyle{empty}

%

\section {Introduction}\label{sect1}

\subsection{Model of primes  from partition first principles}\label{sect1.1} 
The prime numbers mimic a random sequence in key ways, and many  successful models of prime distribution  are based upon probabilistic hypotheses \cite{Tenenbaum}. 
In his lecture notes \cite{Tao}, Terence Tao   
speaks to this point: 
\begin{quote}
{``We do  have a number of extremely convincing and well supported [random] models for the primes (and related objects) that let us predict what the answer to many prime number theory questions (both multiplicative and non-multiplicative) should be....    Indeed, the models are so effective at this task that analytic number theory is in the curious position of being able to confidently predict the answer to a large proportion of the open problems in the subject, whilst not possessing a clear way forward to rigorously confirm these answers!''}
\end{quote}

 In this paper, a sequel to \cite{supernorm}, we formulate a {\it deterministic} model of prime number distribution based on information about the integers   gleaned from facts about {integer partitions}.\footnote{Early versions of this model were  previously presented   by some of the authors in \cite{MAAIM} and the appendix of \cite{supernorm1}.}    
In short, the comparison of two multiplicative statistics defined on  partitions, the norm \cite{SS-norm}  and supernorm \cite{supernorm}, imposes constraints on the number line  unrelated to integer factorizations, which we leverage to give an estimate of the $n$th prime gap. Computations show the model to be reasonably accurate numerically.  We justify the model from partition-theoretic first principles, and we refine the error to derive an explicit  prediction for the value of the $n$th prime  number  which is surprisingly close at small numbers, yielding an estimate   for $\pi(n)$ (the number of primes less than or equal to $n$) that is {\it almost exact} up to   $n=10{,}000$; which predicts the prime number theorem  asymptotically as $n$ increases; and which   predicts the twin prime conjecture for reasons that would be difficult to justify without ideas about partitions. The model also makes  unusual predictions about local behaviors of prime gaps; we find limited computational evidence for one of these predictions. 
Finally, we note deficiencies in   the model and suggest   refinements. 

Below is the central  claim of our   model, which we formulate from first principles about partitions. For $x\in \mathbb R$, let $\left\lceil x\right\rceil$ denote the usual  {\it ceiling function}.  For $x\geq 0$, let $\left\lfloor x\right\rfloor$ denote the  {\it floor function};  throughout this paper {\it we modify the usual floor function definition to let  $\left\lfloor x\right\rfloor:=0$ if $x<0$}.\\

\noindent {\bf Partition model of prime numbers.} {\it The $n$th prime  number   $p_n,\  n\geq 1,$  is modeled by setting $p_1=2$ and for $n\geq 2$ by using the formula 
$$p_{n}\  =\  1\  +\  2\sum_{j=1}^{n-1}\left\lceil \frac{d(j)}{2}\right\rceil\  +\   \varepsilon(n),$$
where  $d(k)$ is the divisor function and $\varepsilon(k)$ is an explicit error   term  that is negligible by comparison. We refer to the case $\varepsilon(n):=0$ for all $n\geq 2$ as Model 1. More nuanced consideration of the error in Model 1 leads to a refined   model that we call   Model 2, which for $n\geq 2$ is the case 
$$\varepsilon(n):=\left\lfloor \pi_2(p_{n-1})-2\gamma(n-1) \right\rfloor,$$
where $\pi_2(k)$ is the number of semiprimes less than or equal to $k\geq 1$, and $\gamma=0.5772\dots$ is the Euler--Mascheroni constant. To simplify calculations, we set $p_1=2,\  p_2=3,$ and for $n\geq 3$ use an asymptotic approximation to the above error term, 
$$\varepsilon(n):=\left\lfloor (n-1)\cdot\left(\log \log(n-1)-2\gamma \right)\right\rfloor,$$
where $\log x$ denotes the natural logarithm, to yield a computational model that we call Model 2*.}\\

The reader is referred to Table \ref{table4} that gives a comparison of our models' predictions for   $\pi(n)$, alongside  corresponding predictions from the prime number theorem.\footnote{The  authors are grateful to Eli DeWitt (Michigan Technological Univ.) and Alexander Walker (Univ. of Georgia) for producing Python code and  computational data that advanced this project, as well as   Maxwell Schneider (Univ. of Georgia) for   consultation about  programming  and combinatorial algorithms, as  undergraduate research  students.} 

\vskip.25in

\begin{table}[h]    \centering
    \begin{tabular}{| c | c | c | c | c | c | c | }
\hline
 $n$\  & $\pi(n)$ & $n/\operatorname{log} n$ & $\operatorname{li}(n)$ & Model 1 &  Model 2 &   Model 2*   \\ \hline
 $10$\  & $4$ & $4.34...$ & $6.16...$ & 4  & 4 & 4 \\ \hline
 $100$\  & $25$ & $21.71...$ & $30.12...$  & 27 & 26 & 27  \\ \hline
  $1000$\  & $168$ & $144.76...$ & $177.60...$ & 184 & 168 & 171 \\ \hline
   $10{,}000$\  & $1229$ & $1085.73...$ & $1246.13...$ & 1352 & 1212 & 1233  \\ \hline
    $100{,}000$\  & $9592$ & $8685.88...$ & $9629.80...$ & 10{,}602 & 9435 & 9618  \\ \hline
    $1{,}000{,}000$\  & $78{,}498$ & $72{,}382.41...$ & $78{,}627.54...$ & 86{,}739 & 77{,}322  & 78{,}740 \\ \hline
\end{tabular}
\vskip.1in
    \caption{Comparing   estimates for $\pi(n)$}
    \label{table4}
    \end{table}

\subsection
{Concepts and notations}
Let $\mathbb Z^+$ denote the   {\it natural numbers}. Let  $\mathbb P$ denote the  {\it prime numbers}. 
Let $p_i\in\mathbb P$   denote the $i$th prime number, viz. $p_1=2, p_2=3, p_3=5,$ etc.; we define $p_0:=1$, and we refer to the subscript $i\in\mathbb Z^+$ of  $p_i\in\mathbb P$ as the {\it index} of the prime number.  
For $n\in \mathbb Z^+$,  we    write its {\it prime factorization} as $n=p_1^{a_1}p_2^{a_2}p_3^{a_3}\cdots p_i^{a_i}\cdots$,  $a_i\in \mathbb Z_{\geq 0}$,  where only finitely many primes $p_i$ have nonzero {\it multiplicity} (number of occurrences); we omit the factor $p_i$ from the notation if $a_i=0$. For $n\in \mathbb Z^+$, let $\pi(n)$ denote the {\it number of primes less than or equal to $n$}.


Let $\mathcal P$ denote the set of {\it integer partitions}, unordered finite multisets of natural numbers including the empty partition $\emptyset\in \mathcal P$ (see \cite{And}).   
For a nonempty partition $\lambda\in \mathcal P$, we notate $\lambda=(\lambda_1, \lambda_2, \lambda_3, \dots, \lambda_r)$,  with   {\it parts} $\lambda_i\in \mathbb Z^+$   written in weakly decreasing order $\lambda_1\geq \lambda_2\geq \dots \geq \lambda_r \geq 1$. 
For $\lambda \in \mathcal P$, let $|\lambda|\geq 0$ denote the {\it size} (sum of parts), let $\ell(\lambda)=r$ be the {\it length} (number of parts), 
and let $m_i=m_i(\lambda)\geq 0$ be the {\it multiplicity}   of $i\in \mathbb Z^+$ as a part of partition $\lambda$. Similarly to prime factorizations, we also   write partitions in {\it part-multiplicity} notation 
$\lambda = \left<1^{m_1} 2^{m_2} 3^{m_3}\dots i^{m_i}\dots \right>$, omitting any part $i\geq 1$ if $m_i=0$.  Let $p(n)$  be the {\it number of   partitions of size $n\geq 0$}. 

There are further  important {\it partition statistics}, i.e.,  functions $f\colon \mathcal P \to \mathbb Z$ such as the partition rank,   crank and others,  that encode   combinatorial properties in their values and often enjoy   modular congruence relations and other nice behaviors \cite{And}. Our model of prime distribution arises from consideration of two newly-defined, 
{\it multiplicative} partition statistics.

\subsection{Multiplicative partition statistics and a multiplicative theory of (additive) partitions}
In 
\cite{Robert_zeta, Robert_arithmetic, Robert_PhD}  and a  series of subsequent publications, the fifth author (Schneider)  introduces a {\it multiplicative} theory of integer partitions   that parallels multiplicative number theory in many respects\footnote{Inspired by Alladi-Erd\H{o}s  \cite{Alladi}, Andrews \cite{Andrews_ideal}, Granville \cite{Granville}, Granville-Soundararajan  \cite{G-S}, Ono \cite{Ono_web} and Zagier \cite{Zagier}.} and studies a variety of new, multiplicative partition statistics -- many of them representing partition-theoretic analogues of well-known arithmetic functions like the M\"{o}bius function $\mu(n)$, the Euler phi function $\varphi(n)$, and other  functions from multiplicative number theory \cite{Apostol1}.  In these works, the fifth author presents (and proves   cases of) the following philosophy: \vskip.1in
\noindent
 {\it Theorems in multiplicative number theory are special cases of more general theorems in partition theory. Theorems in $\mathbb Z^+$ have counterpart theorems in $\mathcal P$, and vice versa. Facts about partitions   map to facts about integers,  which may  be   difficult to deduce without appealing to partition theory.}
\vskip.1in
First applications of this philosophy were chiefly connected to the theory of Dirichlet series through the study of new classes of {partition zeta functions} \cite{ORS, Robert_zeta, SS-zeta}, and to $q$-series and quasimodular forms  through work on the {$q$-bracket of Bloch and Okounkov} \cite{Robert_arithmetic, Robert_jtp}. In  \cite{OSW, OSW2}, Ono, Wagner and the fifth author make a computational   application   using partition-theoretic $q$-series to give  limit formulas  for arithmetic densities   that are comparable to those using Dirichlet series; this is generalized in \cite{Robert_abelian}. A particularly stunning  application of this philosophy  is  made by Craig, Ono and van Ittersum \cite{Ono1}, in which the partition-theoretic coefficients of certain quasimodular forms are proved to be instrinsically connected to the prime numbers.\footnote{Follow-up works such as \cite{Craig, KG} apply similar ideas to   prime powers.}

In \cite{Robert_arithmetic, Robert_PhD}, a {\it partition multiplication} operation is introduced; this comes with a concept of ``subpartitions''   analogous to integer  divisors and a theory of  partition Dirichlet convolution.

\begin{definition}
For two partitions $\lambda, \gamma \in \mathcal{P}$, we define the {\it partition product} $\lambda\cdot \gamma\in\mathcal P$ (or simply $\lambda \gamma$) to be the partition obtained by concatenating the parts of $\lambda$ and $\gamma$ (and then reordering by size to align with notational convention).  For instance, $(5,3,2)\cdot (4,3,1,1)=(5,4,3,3,2,1,1).$\footnote{Equivalently, one sums  the multiplicities of corresponding parts in the two partitions.} 
\end{definition}


A new multiplicative partition statistic is introduced in \cite{Robert_zeta} that is central to the study of partition zeta functions, a multiplicative analogue of the   size $|\lambda|$ called the {\it norm} $N(\lambda)$ of $\lambda \in \mathcal P$. 

\begin{definition}
The {\it norm} of an integer partition $\lambda = (\lambda_1, \lambda_2, \lambda_3, \dots, \lambda_r)$ is the product of its parts:
\begin{equation}\label{normdef}
N(\lambda)\  :=\  \lambda_1  \lambda_2  \lambda_3 \cdots  \lambda_r\  =\  1^{m_1} 2^{m_2} 3^{m_3} \cdots  i^{m_i}\cdots \in \mathbb Z^+; 
\end{equation}
we define $N(\emptyset):=1$ (it is an empty product). \end{definition}

We note  that $N(\lambda \gamma)=N(\lambda)N(\gamma)
$ for $\lambda, \gamma\in\mathcal P$; see \cite{SS-norm} for more about the partition norm.
Along these lines, the  multiplicative and additive branches of number theory enjoy   further analogies.\footnote{For further references at the intersection of partitions and multiplicative number theory, see e.g. \cite{Alladi, Kumar, Merca}.}

\subsection{Isomorphism between partitions and natural numbers}
In \cite{supernorm}, the present authors Dawsey, Just, and Schneider define another multiplicative partition statistic, the {\it supernorm} $\widehat{{N}}(\lambda)$.

\begin{definition}
The {\it supernorm} of an integer partition $\lambda = (\lambda_1, \lambda_2, \lambda_3, \dots, \lambda_r)$ is the product 
 \begin{equation}\label{supernormdef}
 \widehat{N}(\lambda)\  :=\  p_{\lambda_1}p_{\lambda_2}\cdots p_{\lambda_r} \  =\  2^{m_1} 3^{m_2} 5^{m_3} \cdots p_i^{m_i}\cdots \in \mathbb Z^+,\end{equation}
where $p_i\in \mathbb P$ is the $i$th prime number, $i\geq 1$, and $m_j=m_j(\lambda)\geq 0$ as above; we define $\widehat{N}(\emptyset):=1$. \end{definition}

We note   that $\widehat{N}(\lambda \gamma)=\widehat{N}(\lambda)\widehat{N}(\gamma)
$ for $\lambda, \gamma\in\mathcal P$. In \cite{supernorm}, it is proved that $\widehat{N}$ induces an isomorphism of monoids. 

\begin{theorem*}[Dawsey--Just--Schneider]\label{prop0}
The supernorm map $\widehat{{N}}\colon \mathcal P \to \mathbb  Z^+$ 
is an isomorphism between   monoid $(\mathcal P, \  \cdot\  )$ with partition multiplication and  monoid $(\mathbb Z^+, \  \cdot\  )$ with integer multiplication.
\end{theorem*}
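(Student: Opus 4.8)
The plan is to verify directly that $\widehat{N}$ is a bijective monoid homomorphism, exploiting the part-multiplicity description of partitions on one side and the prime factorization description of positive integers on the other. The key observation is that both $\mathcal{P}$ and $\mathbb{Z}^+$ are free commutative monoids: $\mathcal{P}$ is freely generated (under partition multiplication) by the one-part partitions $\langle i^1\rangle$ for $i \geq 1$, and $\mathbb{Z}^+$ is freely generated (under integer multiplication) by the primes $p_i$ for $i \geq 1$, by the fundamental theorem of arithmetic. Since $\widehat{N}\big(\langle i^1\rangle\big) = p_i$, the map $\widehat{N}$ is precisely the unique monoid homomorphism induced by the bijection $i \mapsto p_i$ between the two generating sets, hence it is an isomorphism.

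Carried out concretely, I would proceed in three steps. First, I would recall that multiplicativity, $\widehat{N}(\lambda\gamma) = \widehat{N}(\lambda)\widehat{N}(\gamma)$, is already noted in the excerpt: in part-multiplicity notation, partition multiplication adds the multiplicity vectors $(m_1, m_2, \dots)$ componentwise, and by \eqref{supernormdef} the supernorm sends the multiplicity vector $(m_1, m_2, \dots)$ to $\prod_i p_i^{m_i}$, so adding exponents on the partition side corresponds to multiplying the prime powers on the integer side; also $\widehat{N}(\emptyset) = 1$ is the identity, so $\widehat{N}$ is a monoid homomorphism. Second, for injectivity I would argue that if $\widehat{N}(\lambda) = \widehat{N}(\mu)$ then $\prod_i p_i^{m_i(\lambda)} = \prod_i p_i^{m_i(\mu)}$, and by uniqueness of prime factorization $m_i(\lambda) = m_i(\mu)$ for every $i \geq 1$, which means $\lambda = \mu$ since a partition is determined by its multiplicity sequence. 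Third, for surjectivity I would take an arbitrary $n \in \mathbb{Z}^+$ with prime factorization $n = \prod_i p_i^{a_i}$ (finitely many $a_i$ nonzero) and exhibit the preimage $\lambda = \langle 1^{a_1} 2^{a_2} 3^{a_3}\cdots\rangle$, checking from \eqref{supernormdef} that $\widehat{N}(\lambda) = n$; note $n = 1$ corresponds to $\lambda = \emptyset$.

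There is no real obstacle here — the statement is essentially a restatement of the fundamental theorem of arithmetic in the language of partitions, and the only thing to be careful about is bookkeeping between the two notational conventions (one-part partitions $\langle i^1\rangle$ as generators versus primes $p_i$ as generators, and the empty partition versus $1$). The one point worth stating explicitly is that unique factorization is doing the work in both the injectivity and surjectivity arguments, so that the content of the theorem is exactly the assertion that $\mathbb{Z}^+$ under multiplication is free on the primes; the partition side is free on its single parts essentially by definition of part-multiplicity notation.
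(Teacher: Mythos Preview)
Your argument is correct: verifying that $\widehat{N}$ is a monoid homomorphism (multiplicities add under partition multiplication, exponents add under integer multiplication), then using unique prime factorization for both injectivity and surjectivity, is exactly the right approach, and your framing in terms of free commutative monoids on the generators $\langle i^1\rangle \leftrightarrow p_i$ is a clean way to package it.

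Note, however, that the present paper does not actually supply a proof of this theorem; it is stated here with attribution and a citation to \cite{supernorm}, where the result is established. So there is no in-paper proof to compare against. That said, the argument you give is the standard and expected one, and is essentially how the result is proved in the cited reference --- the content is precisely the fundamental theorem of arithmetic, rephrased via the bijection between multiplicity sequences of partitions and exponent sequences of prime factorizations.
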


It is the central theme of \cite{supernorm}  that {\it the supernorm translates facts about partitions to analogous facts about integers, with partition parts mapping to prime factors, and vice versa.}  For example, in \cite{supernorm}, bijections between integers with certain prime factorizations, and partition formulas for natural densities of subsets of prime numbers, are proved. In a  follow-up note \cite{overpartition}, the fifth author extends partition multiplication to the set $\mathcal O$ of overpartitions (see e.g. \cite{Lovejoy})  
and proves that an extension $\widehat{N}_{\mathcal O}\colon \mathcal O \to \mathbb Q^+$ of the supernorm to overpartitions induces a {\it group} isomorphism between $(\mathcal O, \  \cdot\  )$ with partition multiplication and $(\mathbb Q^+, \  \cdot\  )$ with rational multiplication. 
In \cite{Lagarias}, J. Lagarias  proves further, striking properties  of the supernorm that uniquely characterize the map  $\widehat{N}$ in a lattice-theoretic context. Moreover,  Lagarias and Sun prove interesting asymptotic and statistical relations using sums of the reciprocal of the supernorm over different finite subsets of $\mathcal P$ in  \cite{Lagarias2}.

\subsection{Comparing the norm and supernorm}\label{compare}
Immediately one wonders about the magnitude of the supernorm compared to the partition norm; some analysis to this effect is undertaken in \cite{supernorm}. Here are some easy relations. First, a partition $\lambda$ with no part equal to 1 respects the   inequalities
\begin{equation}\label{ineq2} \ell(\lambda)\  \leq\   |\lambda|\  \leq\  N(\lambda)\  \leq \widehat{N}(\lambda).\end{equation}

A simple observation about the supernorm versus the norm  that one can make from  direct computation 
is as follows:  {\it partitions with greater lengths or having many parts equal to $1$ have supernorms that are much larger than their norms; 
 furthermore,  partitions with fewer parts and no part equal to $1$ have supernorms closer to their norms}. This empirical observation -- that seems evident but we have not proved -- is compatible with  Rosser's theorem  \cite{Rosser}, which says for $n\geq 1$,
 \begin{equation}\label{Rosser}
     p_n\  >\  n\log n;
 \end{equation}
thus, for all $\lambda \in \mathcal P$,
\begin{equation}\label{normestimate}
     \widehat{N}(\lambda)\  >\   2^{m_1(\lambda)}\prod_{i\geq 2}(i \log i)^{m_i(\lambda)}\  =\  N(\lambda) \cdot 2^{m_1(\lambda)}\prod_{i\geq 2}(\log i)^{m_i(\lambda)}. 
 \end{equation}
The right-hand side of \eqref{normestimate} is closer to $N(\lambda)$ when $\lambda$ has fewer parts and no part equal to $1$.


    Moreover, in \cite{supernorm} it is proved that if $\lambda$ has no part equal to 1, then 
\begin{equation}\label{normestimatethm} N(\lambda)\leq \widehat{N}(\lambda)\leq N(\lambda)^{{\log 3}/{\log 2}},\end{equation}
noting $\operatorname{log} 3 / \operatorname{log} 2 = 1.5849\dots$, so the supernorm of  the partition is not far off in order of magnitude from    the norm.   The inequalities in  \eqref{normestimatethm} tell us     {\it partitions  having  fixed norm and no part equal to $1$  are mapped into a relatively small interval on the number line by the supernorm}.     

We observe further that  if $\lambda$ is an unrestricted partition of size  $|\lambda|=n$ and $p_n$ is the $n$th prime number, then the supernorm falls in the interval  \begin{equation}\label{supernorminterval4.2} p_n\leq \widehat{N}(\lambda) \leq 2^n,\end{equation}
with equality on the left-hand side when $\lambda=\left<n^1\right>$, and equality on the right when $\lambda = \left<1^n\right>$; this is  proved in \cite{supernorm}. By similar reasoning, if $\lambda$ is a partition with no part equal to 1, $|\lambda|=n$, then 
\begin{equation}\label{supernorminterval4.3} p_n\leq \widehat{N}(\lambda) \leq 3^{n/2},\end{equation}
with equality on the right when $n$ is even and  $\lambda = \left<2^{n/2}\right>$. The above      inequalities  suggest    {\it partitions of  fixed {\it size}  are mapped across a relatively {\it large} interval on the number line by the supernorm},   by comparison with partitions of fixed norm. 

Combining the   inequalities above, we   see for $\lambda$ a partition with no part equal to $1$, 
\begin{equation}\label{normestimatethm4} p_{|\lambda|}\  \leq\  \widehat{N}(\lambda)\  \leq \  N(\lambda)^{{\log 3}/{\log 2}}.\end{equation}

One further inequality    one  can observe  computationally,    which is proved by J. E. Cohen \cite{Cohen} as a consequence of Rosser's theorem,\footnote{The authors are         grateful to Abhimanyu Kumar for pointing us to Cohen's proof of this inequality.} is that for $\lambda\in \mathcal P$ such that $N(\lambda)\geq 5$,
\begin{equation}\label{almostalways} 
p_{N(\lambda)}\leq \widehat{N}(\lambda).\end{equation}
 Inequality \eqref{almostalways} says that for $n\geq 5$,  {\it partitions of fixed norm  equal to $n$  have supernorms lying above $p_n$}  (similarly to   fixed size above). 

From these     inequalities,   the central observation of this paper  comes into     view: {\it comparison of the partition norm and supernorm imposes constraints on the  prime numbers.} 
 For instance, from      \eqref{normestimatethm4}   one can     deduce an upper bound for the $n$th prime number.

\begin{proposition}\label{prop}
    For $n\geq 2$, we have that 
    $$p_n\  \leq\   n^{{\log 3}/{\log 2}}.$$
\end{proposition}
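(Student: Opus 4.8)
The plan is to apply inequality \eqref{normestimatethm4} to a single, well-chosen partition. Recall that \eqref{normestimatethm4} holds for \emph{every} partition $\lambda$ with no part equal to $1$, asserting in particular that $\widehat{N}(\lambda)\leq N(\lambda)^{\log 3/\log 2}$. So the strategy is: given $n\geq 2$, produce a partition $\lambda$ whose size is $n$ but whose norm is as small as possible, subject to having no part equal to $1$. Concentrating all of the ``mass'' of the partition into a single part does exactly this, and it will make $\widehat{N}(\lambda)$ equal to $p_n$ on the nose.

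Concretely, for $n\geq 2$ I would take $\lambda=\langle n^1\rangle$, the partition consisting of the single part $n$. Then $|\lambda|=n$, so $p_{|\lambda|}=p_n$; the norm is the product of the (one) part, so $N(\lambda)=n$; and $\widehat{N}(\lambda)=p_n$ directly from the definition \eqref{supernormdef} of the supernorm. Since $n\geq 2$, this $\lambda$ has no part equal to $1$, so \eqref{normestimatethm4} (equivalently, the right-hand inequality of \eqref{normestimatethm}) applies and yields
\[
p_n \;=\; \widehat{N}(\lambda) \;\leq\; N(\lambda)^{\log 3/\log 2} \;=\; n^{\log 3/\log 2},
\]
which is the claimed bound.

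There is essentially no obstacle here beyond recognizing which partition to substitute: the analytic content — the second inequality in \eqref{normestimatethm}, ultimately a consequence of Rosser's theorem \eqref{Rosser} — is already established in \cite{supernorm}, and we are simply harvesting a corollary. As a sanity check on the extremality of the choice, note that the left inequality in \eqref{normestimatethm4} becomes the trivial equality $p_n=p_n$ for $\lambda=\langle n^1\rangle$, and at $n=2$ one gets $p_2=3=2^{\log 3/\log 2}$, so the proposition is sharp at its smallest admissible value.
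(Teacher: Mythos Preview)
Your proof is correct and follows exactly the paper's approach: apply \eqref{normestimatethm4} to the single-part partition $\lambda=(n)$, for which $|\lambda|=N(\lambda)=n$. (One small aside: the upper bound in \eqref{normestimatethm} is established in \cite{supernorm} directly, not via Rosser's theorem, so that attribution is slightly off---but this does not affect your argument.)
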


\begin{proof}
 Take $\lambda = 
 (n),\  n\geq 2$; then substitute $|\lambda|=N(\lambda)=n$ on the   left and right  sides of \eqref{normestimatethm4}.  
\end{proof}

In Section \ref{sect2}, we formulate a model of   primes  based on the above observations about partitions.

\section{Partition model  of prime distribution}\label{sect2}

\subsection{Partition-theoretic model of prime gaps} 


We use our preceding  comparisons of the partition norm and supernorm to formulate a model of prime numbers. 

While the set of primes is highly enigmatic in number theory and known for its seemingly  random distribution, from the perspective of the supernorm, prime numbers have somewhat regular behavior: the $n$th prime $p_n$ represents the supernorm  $\widehat{N}(\lambda)$ of the partition $\lambda = (n)$ with a {\it single part} $n\in \mathbb Z^+$ (recalling $\emptyset$ maps to $1$). The sequence of primes is   the image of the sequence of natural numbers under the map $\widehat{N}$, while   composite integers   fill in the gaps in a more complicated way.

From this  perspective, the complexities of prime distribution are a manifestation of the complex proliferation of {\it partitions with multiple parts}, which are mapped by the supernorm to   {\it gaps between primes}. 
Thus prime gaps have a combinatorial interpretation: they are  the images of certain subsets of $\mathcal P$ under the supernorm map. This implies the following statement.

\begin{proposition}\label{measure}
Measuring prime gaps is equivalent to enumerating     partitions that map into the   respective prime gaps under the supernorm  $\widehat{N}$:
\begin{equation*}\label{gapequiv}
p_{n+1}-p_n \  \  =\  \  \    \#\left\{\lambda\in\mathcal P\  :\  p_n \leq \widehat{N}(\lambda) <p_{n+1}\right\}.\end{equation*} 
\end{proposition}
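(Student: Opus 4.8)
The plan is to count, for each natural number $m$ with $p_n \le m < p_{n+1}$, how many partitions $\lambda \in \mathcal P$ satisfy $\widehat{N}(\lambda) = m$, and then sum over the gap. Since the interval $[p_n, p_{n+1})$ contains exactly $p_{n+1} - p_n$ consecutive integers, it suffices to show that \emph{every} positive integer $m$ is the supernorm of exactly one partition, i.e. that the fibers of $\widehat{N}$ over $\mathbb Z^+$ are singletons. This is precisely the content of the Dawsey--Just--Schneider theorem quoted above: $\widehat{N}\colon \mathcal P \to \mathbb Z^+$ is a monoid isomorphism, hence in particular a bijection of underlying sets. So the preimage $\{\lambda : \widehat{N}(\lambda) = m\}$ has exactly one element for each $m \in \mathbb Z^+$.

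From there I would argue as follows. Write
\begin{align*}
\#\left\{\lambda \in \mathcal P : p_n \le \widehat{N}(\lambda) < p_{n+1}\right\}
&= \sum_{m = p_n}^{p_{n+1}-1} \#\left\{\lambda \in \mathcal P : \widehat{N}(\lambda) = m\right\}
= \sum_{m = p_n}^{p_{n+1}-1} 1 = p_{n+1} - p_n,
\end{align*}
where the middle equality uses bijectivity of $\widehat{N}$ and the fact that the map sends $\mathcal P$ onto all of $\mathbb Z^+$ (surjectivity guarantees each term is at least $1$, injectivity that it is at most $1$). It is worth recording explicitly why the sum ranges over exactly the integers in $[p_n, p_{n+1})$: $p_n$ and $p_{n+1}$ are consecutive primes, so there is no integer strictly between them other than the composite numbers $p_n+1, \dots, p_{n+1}-1$ together with the endpoint $p_n$ itself, giving $p_{n+1}-p_n$ values of $m$.

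Honestly, there is no real obstacle here: the statement is an immediate corollary of the isomorphism theorem, and the only thing to be careful about is the half-open interval convention (including $p_n$, excluding $p_{n+1}$) so that the integers counted number exactly $p_{n+1}-p_n$ rather than one more or one fewer. I would also remark, for context, that one can describe the unique partition with supernorm $m$ concretely: if $m = p_{i_1}^{a_1} p_{i_2}^{a_2}\cdots$ is the prime factorization, the corresponding partition is $\langle i_1^{a_1} i_2^{a_2} \cdots\rangle$ in part-multiplicity notation, i.e. each prime $p_i$ dividing $m$ contributes the part $i$ with multiplicity $a_i$; this makes the bijection, and hence the count, completely transparent. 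The proof is therefore essentially a one-line deduction from the quoted theorem plus a bookkeeping check on the endpoints.
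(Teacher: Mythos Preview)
Your proof is correct and takes essentially the same approach as the paper's: both deduce the count immediately from the bijectivity of the supernorm map $\widehat{N}\colon \mathcal P \to \mathbb Z^+$. The paper's version is simply the one-line observation ``immediate since the supernorm is a bijection,'' while you have spelled out the fiber count and the endpoint bookkeeping explicitly.
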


\begin{proof}
This is immediate since the supernorm is a bijection between $\mathcal P$ and $\mathbb Z^+$.  
\end{proof}

Our goal based on Proposition \ref{measure} is to {identify partitions that map into each prime gap} so we can count them. This goal suggests a potentially workable heuristic.\\

\noindent {\bf Heuristic.}  {\it Make an educated guess as to which partitions   map  into the $n$th prime gap  under the supernorm $\widehat{N}$, in order to estimate the value of $p_{n+1}-p_n$.}\\

Below, we will refine this  prime gap  heuristic 
from elementary considerations 
to formulate an initial, if overly simplistic,   partition model of prime numbers that   agrees with the facts quite well. 
We  then address sources of error to improve the model significantly. 

\subsection{Simplifying assumption}
We  wish to simplify the problem as much as possible. The    goal is to  estimate the length of the interval   $[p_n,p_{n+1})$ with high  accuracy  for each $n\geq2$.   
That is, we want to enumerate partitions   we  expect to map to an interval just above $p_n$   under the supernorm. We summarize   observations   we made in  Section \ref{compare} that may aid us in identifying such partitions:
\begin{enumerate}

\item Partitions $\lambda$ with a smaller number $\ell(\lambda)$  of parts and with no part equal to $1$ 
should have supernorms   closer to their norms in magnitude, based on   the considerations preceding \eqref{Rosser}.

\item Partitions  $\lambda$ with   {fixed norm} $N(\lambda)=n$  and with no part equal to $1$ should have supernorms of roughly  comparable magnitudes to each other, based on consideration of \eqref{normestimatethm}.

\item Partitions  $\lambda$   with fixed norm $N(\lambda)=n\geq 5$ and with no part equal to $1$  respect the inequality $p_{n}\leq \widehat{N}(\lambda)  \leq \  n^{{\log 3}/{\log 2}}$, based on consideration of  \eqref{normestimatethm4} and \eqref{almostalways}.
\end{enumerate}

Taken together, the preceding observations produce a qualitative  model   in our  minds: {\it The number line just above $p_n$ is  dominated  by the images under $\widehat{N}$ of partitions of norm equal to $n$, having no part equal to $1$, and possessing a   small number of parts.} We note   partitions with no $1$'s yield {\it odd} supernorm values; for each odd value, we must also count the even number that   follows.  Furthermore, the norm-$n$ partitions with {\it many} parts are mapped by $\widehat{N}$ closer to the upper bound $n^{{\log 3}/{\log 2}}$ and do not map into the gap $[p_n , p_{n+1})$, but rather give rise to a lengthy  ``tail'' of leftover integers that contribute to subsequent prime gaps more diffusely, perhaps        somewhat randomly. The interval $[p_n , p_{n+1})$ also includes integers   arising from the   ``tails'' of subsets of partitions with norms less than $n$;  we anticipate their contribution  is negligible asymptotically.

To translate this qualitative vision into a computational model one can test and use to make predictions, we  shall  assume  the following    extreme  simplification to address the observations above.\footnote{In \cite[Appendix]{supernorm1}, the second, fourth, and fifth authors give more detailed discussion of this  assumption.} 

\begin{assumption}  Assume for simplicity that all odd integers in the interval $[p_n, p_{n+1}),$ $n\geq 2$,  are the images of partitions with norm equal to $n$, with  no part equal to $1$,   having    one or two parts, under the supernorm $\widehat{N}$. Then  
\begin{equation}\label{gapequiv2}
p_{n+1}-p_n \  \  =\  \  2\cdot  \#\left\{\lambda\in \mathcal P\  :\   \  N(\lambda)=n, \  m_1(\lambda)=0,\    \ell(\lambda)=1\  \text{or}\  2\right\}.\end{equation}
\end{assumption} 

\begin{remark}
    That is, we will assume the images of the above partitions dominate the number line immediately above $p_n$.
\end{remark}

Integers having   two prime factors are called {\it semiprimes}. We expect  this    simplification yields   an {\it underestimate} of  prime gaps  from the incorrect     assumption   all   odd numbers are   prime or semiprime. 

Now observe that  the  partitions of norm  $n\geq 2$, with no part equal to 1 and having one or two parts, are precisely the set consisting of the partition $(n)$ into one part,  together with  partitions $(d_1,   d_2)$ where $d_1 d_2=n, \  d_1 \geq d_2$; that is,   partitions whose parts are pairs of divisors of $n$. There are $\left\lceil d(n)/2\right\rceil$ such divisor pairs where $d(k)$ is the {\it divisor function}, setting $d(0):=0$,\footnote{We define $d(0):=0$ for our purposes here, as there are no positive integers $d_1, d_2,$ such that $d_1 d_2=0$.} including $d_1=n, d_2=1,$ which we associate to  the partition $(n)$.     Then Simplifying Assumption 1 translates to the   relation 
\begin{equation}\label{estimate1}
    p_{n+1}-p_n\  =\  2\left\lceil \frac{d(n)}{2}\right\rceil,
\end{equation}
where $\left\lceil x\right\rceil$ is the ceiling function. 

We refer the reader to Table \ref{table2} to see   \eqref{estimate1}  does correctly predict most of the prime gaps between 1 and 100. 
Computations show this near-correctness  diminishes as $n$ increases, but as we   will prove, 
equation \eqref{estimate1} is compatible with the prime number theorem asymptotically. 

\subsection{Initial model} The following formulation is suggested by    discrepancies between the   partition norm and  supernorm noted in Section \ref{compare}, which led   to  Simplifying Assumption 1 above. Since $p_n=1+\sum_{k=0}^{n-1}(p_{k+1}-p_k)$ by telescoping series, defining $p_0:=1$ as we did above, then replacing each summand $p_{k+1}-p_k$ by the estimate $2\left\lceil {d(k)}/{2}\right\rceil$ from   \eqref{estimate1} leads us to a model of prime numbers.  \\

\noindent {\bf Model 1.} {\it The prime numbers $p_1, p_2, p_3,$ $\dots, $ can be modeled by the sequence having initial value $p_1=2$ and for $n\geq 2$ having the values 
$$p_{n}\  =\  1\  +\  2\sum_{k=1}^{n-1}\left\lceil \frac{d(k)}{2}\right\rceil.$$}

\begin{remark}
    For   computational ease, note    $2\left\lceil d(k)/{2}\right\rceil=d(k)$ for all $k\neq m^2, m\in \mathbb Z^+$. On the other hand, if $k$ is a perfect square then $2\left\lceil {d(k)}/{2}\right\rceil=d(k)+1$. Thus for $N\geq 1$,  we have  that 
\begin{equation}\label{simplify}
2\sum_{k=1}^{N}\left\lceil \frac{d(k)}{2}\right\rceil\  =\   \sum_{k=1}^{N} d(k)\  +\   \left\lfloor \sqrt{N}\right\rfloor\  =\  \sum_{k=1}^{N}\left\lfloor \frac{N}{k}\right\rfloor\  +\    \left\lfloor \sqrt{N}\right\rfloor,
\end{equation}
   by well-known methods related to the summatory function of $d(k)$ \cite{Apostol1}. This yields an equivalent formula in Model 1  that does not require explicitly computing the divisor function: 
   \begin{equation}
p_n\  =\  1+\sum_{k=1}^{n-1}\left\lfloor \frac{n-1}{k}\right\rfloor\  +\  \left\lfloor \sqrt{n-1}\right\rfloor.
\end{equation}
\end{remark}

The first ten terms of this sequence of estimated values for $p_n$ are $$2, 3, 5, 7, 11, 13, 17, 19, 23, 27,$$   
and from here Model 1 does not give the correct sequence of   prime values. However, the model does {\it almost} give the correct sequence of {\it prime gaps} at small numbers; see Table \ref{table2}. 

\begin{table}[h]    \centering
    \begin{tabular}{| c | c | c | c |   }
\hline
 $n$\  & $p_n$ & {$p_{n+1}-p_n$ (Actual)} & {$p_{n+1}-p_n$ (Model 1) }\\ \hline
 {1}&  {2} & {1}& {1} \\ \hline
 {2}&{3} & {2}&{2} \\ \hline
 {3}&{5} &{2}&{2} \\ \hline
 {4}& {7}&{4} &{4} \\ \hline
 {5}& {11}&{2}& {2}\\ \hline
 {6}&{13}&{4}& {4}\\ \hline
 {7}&{17}&{2}& {2} \\ \hline
 {8}&{19}&{4}  & {4} \\ \hline
 {9}& {23} &{6}& \colorbox{lightgray}{4} \\ \hline
 {10}& {29} & {2}& \colorbox{lightgray}{4} \\ \hline
 {11}& {31} & {6}& \colorbox{lightgray}{2} \\ \hline
 {12}& {37} & {4}& \colorbox{lightgray}{6} \\ \hline
  {13}& {41}&{2}& {2} \\ \hline
  {14}&  {43}&{4} & {4} \\ \hline
 {15}& {47}&{6}&  \colorbox{lightgray}{4}\\ \hline
  {16}& {53}& {6} &{6}\\ \hline
  {17}&  {59}&{2} & {2} \\ \hline
  {18}& {61}&{6}& {6} \\\hline
 {19}& {67}&{4}&  \colorbox{lightgray}{2} \\ \hline
 {20}& {71}& {2} &  \colorbox{lightgray}{6} \\ \hline 
 {21}& {73}& {6} & \colorbox{lightgray}{4}\\ \hline
  {22}& {79}& {4} & {4} \\ \hline
 {23}& {83}& {6}&  \colorbox{lightgray}{2} \\\hline
  {24}&  {89}&{8}& {8} \\ \hline
  {25}& {97}&{4} & {4} \\ 

\hline
\end{tabular}\vskip.1in
    \caption{Comparing actual prime gaps to predictions from Model 1; we highlight entries where the prediction for the $n$th prime gap is off}
    \label{table2}
    \end{table}

One can   estimate $\pi(n)$   by counting the primes up to $n$ predicted by the model. Table \ref{table4} and Table  \ref{table2} display that  Model 1 does capture   aspects of prime distribution qualitatively: at small numbers, it emulates the unpredictable prime gaps reasonably well, and at large numbers, it yields  numerical estimates for $\pi(n)$ that are   comparable to   the prime number theorem. Note from Table \ref{table4} that Model 1 evidently leads to an {\it overestimate} for $\pi(n)$.\footnote{We interpret this from our premises as   a result of the model's representing an underestimate of prime gaps.}

In the next subsection, we  discuss explicit   predictions of Model 1 and try to interpret them. 

\begin{remark}
We note  that J. N. Gandhi proves a formula for the $n$th prime number in \cite{Gandhi}, which  represents a base-$2$ version of the sieve of Eratosthenes \cite{Gandhi2}.\footnote{The authors are grateful to Ken Ono for pointing us to Gandhi's formula.} 
\end{remark}

\subsection{Predictions from Model 1}\label{prob}

Below we   show that Model 1 implies 
the main asymptotic term of the prime number theorem and produces   unusual predictions about local fluctuations in prime gaps, including the infinitude of twin primes as a special case. 




The {\it prime number theorem} was   conjectured by both Gauss and Legendre in the 1790s based on newly published tables of primes, and was known as the ``prime number conjecture'' for a century, until  Hadamard and de la Vall\'{e}e Poussin proved it independently at the very end of the nineteenth century \cite{Tenenbaum}. The prime number theorem is usually expressed as an asymptotic estimate for $\pi(n)$,  
\begin{equation}\label{pnt1}
    \pi(n)\  \sim\  \frac{n}{\log n}
\end{equation}
for $n\geq 2$ as $n$ increases, where $\log x$ denotes the natural logarithm;  or by the asymptotically equivalent (and evidently more accurate) estimate provided by the {\it logarithmic integral} $\operatorname{li}(n)$: 
\begin{equation}
    \pi(n)\  \sim\  \operatorname{li}(n):= \int_{2}^{n}\frac{dt}{\log t}.
\end{equation}
The prime number theorem is equivalently  formulated as an asymptotic estimate for the $n$th prime: 
\begin{equation}\label{pnt}
    p_n\  \sim\  n \log n. 
\end{equation}
The estimate \eqref{pnt} follows from \eqref{pnt1} by observing that  $\pi(p_n)=n\  \sim\   p_n/\log p_n\  \sim\  p_n/\log n$.  
This final  formulation of the prime number theorem is suggested naturally by  Model 1. The first prediction of the model is a statement of fact. 

\begin{prediction}[Theorem]\label{cor2}
Model 1  predicts the 
prime number theorem's estimate for the $n$th prime,  $$p_n\  \sim\  n\log n\  \  \text{as}\  \  n\to \infty.$$ \end{prediction}

\begin{proof}
    It follows from Simplifying Assumption 1 that  \begin{equation}\label{newgap00}
p_n\  =\  1+\sum_{0\leq i \leq n-1}(p_{i+1}-p_{i})\  \sim \sum_{1\leq i \leq n-1}d(i)\  =\  \  (n-1) \log (n-1)+(n-1)(2\gamma-1) +O(\sqrt{n})\end{equation} where $\gamma=0.5772\dots$ is the Euler-Mascheroni constant; the final equality is due to Dirichlet \cite{Apostol1}. The far right-hand side of \eqref{newgap00} is asymptotic to $n \log n$ as $n$ increases.
\end{proof}
\begin{remark}
    We note    there is a second-order error term of  magnitude $n\log \log n$ implied by the prime number theorem, arising from $n=\pi(p_n)\sim p_n/\log p_n$, since $\log p_n \sim \log(n \log p_n)  \sim \log n + \log\log n$, that is missing from  Prediction \ref{cor2}. We address this missing error  in Section \ref{sect5}. 
\end{remark}

An unusual feature of Model 1 is that it suggests {\it predictable local behaviors of prime gaps}.

\begin{prediction}\label{cor3}
For $n\geq 2$, Model 1 predicts the $n$th prime gap   is larger or smaller, depending on if $n$ has a larger or smaller number of divisors, respectively. \end{prediction}

\begin{proof}
    This is   immediate     from \eqref{estimate1}, as $d(n)$ effectively controls the   prime gap in this model. 
\end{proof}

Prediction \ref{cor3} is   counterintuitive:   it says that {\it the indices of primes influence the prime gaps}. Table \ref{table2} gives evidence of this up to $n=100$, where the majority of   prime gaps match the predicted values exactly, but further computations indicate this exact matching does not hold as $n$ increases. 
On the other hand, it follows from the prime number theorem that the average number of prime gaps continues to match Model 1's predictions asymptotically. Interpreted strictly, Model 1 says the $n$th prime gap should be close to $d(n)$; Prediction \ref{cor3} is  a more probabilistic interpretation. 

The twin prime conjecture is a natural consequence of Model 1.

\begin{prediction}\label{cor4}
Model 1 predicts twin prime pairs occur at prime-indexed primes, i.e.,  at primes $p_n$ such that $n\in \mathbb P$. Moreover, Model 1 predicts the set of twin primes is infinite.\end{prediction}

\begin{proof}
    This prediction focuses on a special case of Model 1, the case  in which  $d(n)=2$ if and only if  $n\in \mathbb P$. Since prime-indexed primes are associated to twin prime pairs in Model 1,   the predicted infinitude of twin primes follows from the infinitude of the primes proved by Euclid. 
\end{proof}

Known observations on finite intervals are suggestive of the infinitude prediction, although as of this writing, the twin prime conjecture is undecided. However,  since we expect that Model 1 is an underestimate of most  prime gaps, it cannot claim to model twin primes with high accuracy;  the model does not claim the precision to definitively predict a gap of minimal size $2$, ruling out  possible contributions from partitions with more than two parts mapping into the gap under $\widehat{N}$. We wonder if  Prediction \ref{cor4} can be interpreted probabilistically like Prediction \ref{cor3}:  {\it prime-indexed primes are more likely to belong to twin prime pairs by comparison with arbitrary prime numbers}. 

In Section \ref{sect4}, we make a preliminary check of these predictions. 

\begin{remark}
    One can formulate  further, similar predictions  from Model 1. 
    For example, consider a prime number $p_m$ whose index $m \in \mathbb P$ is itself the first member of a pair of twin primes, i.e., $m$ and $m+2$ are both prime. The model predicts $p_m$ will be the first  member  of a prime $4$-tuplet, meaning that $p_m,\  p_{m+1}$ are twin primes and $p_{m+2},\   p_{m+3}$ are also twin primes (see e.g.  \cite{Forbes}). 
    \end{remark}


\subsection{Critique of Model 1}
As   noted previously, while Model 1   emulates certain   features of prime distribution, we expect  {\it a priori} that it represents an underestimate of  prime gaps in general and an overestimate for $\pi(n)$,  since the model does not account for odd numbers with more than two prime factors; this expectation is supported by our data.   
Moreover,   the model is missing a second-order  error term of size $n\log\log n$  by comparison with  the prime number theorem's  estimate for $p_n$. 

While the model impressed itself, so to speak, on the authors from   observations about the partition norm and supernorm, the considerations   preceding \eqref{Rosser} involve an empirical observation that is not proved rigorously, although we noted it is  compatible with Rosser's theorem. 
Furthermore, we   have not   proved (or  observed) that  {\it even one} partition with norm $n$ should necessarily map into the $n$th prime gap under the supernorm; this is an assumption justified by its predictive success. 
Model 1 also does not suggest     that  composite   integers should respect anything like   the correct integer ordering; nor does it  account for constraints on primes imposed by multiplication. 

We   test  Predictions \ref{cor3} and \ref{cor4} in Section \ref{sect4}. In Section \ref{sect5}, we examine the error in Model 1 and make further simplifying assumptions that lead to a  computational  model with greater   accuracy.

\section{Testing the model}
\label{sect4}

\subsection{Testing predictions from Model 1}  Tables \ref{table4} and \ref{table2} and Prediction \ref{cor2} provide reasonable support for Model 1. Predictions \ref{cor3}
 and \ref{cor4} place unusual emphasis on the factorizations of the {\it indices} of prime numbers. 
 We give these unusual predictions a  preliminary check.  

\subsection{Methods}
 We utilize the   high-performance computing shared facility at Michigan Technological University to check Predictions \ref{cor3} and \ref{cor4}. For    calculations we use  Wolfram Mathematica computer algebra system and Python programming language. The authors made use of OpenAI GPT-4o large language model software    to assist with writing Python code to generate experimental data exclusively in Sections \ref{test1} and \ref{test2} below; we checked and revised this code as needed and take full responsibility for the results. We will share our computer code and data upon request.
 \subsection{Computational test of Prediction \ref{cor3}}\label{test1} 
 Prediction \ref{cor3}  says the $n$th prime gap is larger or smaller depending on the size of $d(n)$. As we noted above, Table \ref{table2} is consistent with this prediction because $p_{n+1}-p_n$ is equal to $2\lceil d(n)/2 \rceil$ {\it exactly} for most of the prime gaps  in the table. However, further computations reveal  the exact equalities decrease in frequency as $n$ increases.  
 
 We   use a  less exact method to check this  prediction. It follows from the prime number theorem that the {\it average order} of the $n$th prime gap is asymptotic to $\log p_n\sim \log n + \log \log n$ as $n\to \infty$. As is standard (see e.g. \cite{merit}), we define  the  {\it merit} $M(n)$ of the $n$th prime gap to be  the   ratio   \begin{equation}\label{M}  M(n)\  :=\  \frac{p_{n+1}-p_n}{\log p_n}.
 \end{equation}  
 When the merit is greater or less than one, the prime gap is larger or smaller than average, respectively. Now, the average order of $d(n)$ is asymptotic to $\log n$, and likewise for $2\lceil d(n)/2\rceil$ that is equal to  $d(n)$ except at perfect squares when it equals $d(n)+1$. For $n\geq 2$, let us    define the {\it Model 1 merit}   $M_1(n)$ to be  the analogous ratio with respect to the $n$th {\it modeled} prime gap,
 \begin{equation}  M_1(n)\  :=\  \frac{d(n)}{\log n};
 \end{equation} 
 we omit the ceiling function for computational ease  since $d(n)=2\lceil d(n)/2\rceil$ almost always.  When the Model 1 merit is greater or less than one, the divisor function is larger or smaller than average.   

To give a preliminary check of  Prediction \ref{cor3}    --  whether prime gaps   tend to increase or decrease with $d(n)$ -- we count how often $M(n)$ and $M_1(n)$
are  simultaneously greater or less than one.\  
 
\begin{result}
{We compute $M(n)$ and $M_1(n)$ for $n\leq 1{,}000{,}000$. We find     $M(n)>1$   in $36.01\%$ of instances and $M(n)<1$   in $63.99\%$ of instances. We find   $M_1(n)>1$ in $37.94\%$ of instances and $M_1(n)<1$ in $62.06\%$ of instances. Finally, we find    $M(n)$ and $M_1(n)$ are simultaneously greater or less than one in $53.34\%$ of instances.}
\end{result}

\begin{remark}
   The   similarity between corresponding   $M(n)$ and $M_1(n)$ percentages is noteworthy.
   For instance, one   expects $M(n)<1$ to occur often since smaller prime gaps are more abundant than larger gaps up to $n$ (see e.g. \cite{Tao2}).   One also expects     $M_1(n)<1$  often since the {\it normal} order of $d(n)$ is well known to be  asymptotic to $\log \log n = o(\log n)$ as $n\to \infty$. But we are not aware of a   well-known      reason     $p_{n+1}-p_n$ should be smaller than average  with  similar  frequency to the divisor function. Perhaps an analysis along the lines of that given in \cite{Tao2} would provide an explanation.
\end{remark}

Up to  one million, a slight majority of       prime gaps    increase or decrease   with the divisor function.

\subsection{Computational test of weak version of Prediction \ref{cor4}}\label{test2}  In   Section \ref{prob}  we make a  probabilistic conjecture based on Prediction \ref{cor4}, that a prime-indexed prime is more likely to begin a twin prime pair than is an arbitrary prime number.     
Table \ref{table4} supports this weak version of Prediction \ref{cor4} up to $n=100$; there are 6 out of 8 twin prime pairs correctly predicted in the table (but these exact results do not continue as $n$ increases)  and an additional three are predicted erroneously. 

Furthermore, one can deduce from Cram\'{e}r's random model of primes that the number of twin prime pairs less than or equal to $n$ is asymptotic to $n/(\log n)^2$   \cite{Tenenbaum}. The number of prime-indexed primes less than or equal to $n$ is    $\pi(\pi(n))\sim\pi(n)/\log \pi(n) \sim n/(\log  n)^2$ by the prime number theorem so, in fact,   {\it we do expect twin prime pairs and prime-indexed primes   to be  approximately equinumerous at large numbers}. How often do twin prime pairs and prime-indexed primes co-occur? 

Let us refer to a prime gap whose first member has prime index as a {\it prime-indexed prime gap}, and    to a twin prime pair whose first member has prime index as a {\it prime-indexed twin prime pair}.

We run a few cursory experiments to explore statistics related to Prediction \ref{cor4}. First, we give a simple ``greater-or-less'' test like the one in    Section \ref{test1} related to the merit $M(n)$. Model 1 predicts $M(n)=2/\log p_n<1$ every time $n$ is prime  since the $n$th prime gap is  $2\lceil d(n)/2\rceil$ in the model.

\begin{result}
We only compute   merits   of prime-indexed prime gaps up to one million, i.e.,   we compute $M(p)$ over primes  $p\leq \pi(1{,}000{,}000)=78{,}498$. We find     $M(p)>1$   in $37.39\%$ of instances and $M(p)<1$   in $62.61\%$ of instances. Finally, up to one million we find $10.59\%$ of prime-indexed prime gaps are also  twin prime pairs, and $9.98\%$ of twin prime pairs begin with prime-indexed primes. Data suggests these final  percentages    decrease as $n\to \infty$; see  Table \ref{table3}.  
\end{result}

\begin{table}[h]    \centering
    \begin{tabular}{| c | c | c | c |   }
\hline
 $n$\  & {\# Twin prime pairs $\leq n$} & {\# Prime-indexed twin prime pairs $\leq n$}  \\ \hline
 {10}&  {2} & {2}  \\ \hline
 {100}&{8} & {6}  \\ \hline
 {1000}&{35} &{12}  \\ \hline
 {10,000}& {205}&{30}  \\ \hline
 {100,000}& {1224}&{154} \\ \hline
 {1,000,000}&{8169}&{816}\\ \hline 
\end{tabular}\vskip.1in
    \caption{Total number of twin prime pairs vs. those beginning with prime-indexed primes}
    \label{table3}
    \end{table}

This result is inconclusive. We   find $M(n)<1$ in the majority of cases, a result  compatible with Prediction \ref{cor4}. However,   our computational results for $M(n)$ restricted to prime values of $n$ split into  almost the same ``greater-than'' and ``less-than'' proportions as   the computations over all $n$   in Section \ref{test1}; apparently restricting   to prime indices $n$ does not increase   the $M(n)<1$ proportion.

\subsection{Second test of weak version of Prediction \ref{cor4}} Next,  we directly compare the probability that a prime-indexed prime  begins a  twin prime pair with the probability that an arbitrary prime number does so. 
Define
\begin{equation}
\operatorname{Prob_1}(n):=\frac{\#\{\text{prime-indexed primes}\  p\leq n\  \text{such that}\  p+2\   \text{is also prime}\}}{\#\{\text{prime-indexed primes}\  p\leq n\}},
\end{equation}
the probability that a random prime-indexed prime $\leq n$ begins a twin prime pair, and define
\begin{equation}
\operatorname{Prob_2}(n):=\frac{\#\{\text{primes $p\leq n$ such that $p+2$ is also prime}\}}{\pi(n)},
\end{equation}
the probability that a random unrestricted prime $\leq n$   begins a twin prime pair.   
We   compute  
\begin{equation}\label{P}
P(n):=\frac{\operatorname{Prob_1}(n)}{\operatorname{Prob_2}(n)}
\end{equation}
 as $n$ increases.  By   probabilistic reasoning, we     expect the ratio $P(n)$ to be around one:   prime-indexed primes and arbitrary primes 
 should have the same   probability of being twin primes. 
 
\begin{result}
At small numbers,     $P(n)$ is almost always greater than one with minor oscillations in the value locally, except for infrequent larger oscillations dipping   below one. 
From  about $n=10{,}000{,}000$ up to at least $10$ billion, $P(n)$ is strictly greater than one. 
See Figure \ref{fig3}. 
\end{result}

This result is compatible with Prediction \ref{cor4} but we can see  another possible  explanation for  it: {\it on average, primes become sparser and twin prime pairs become   rarer  as $n$ grows}. The 
typical prime-indexed prime less than or equal to $n$   
is on average smaller than the 
typical arbitrary prime on the same interval, and thus may be more likely to be the first of a twin 
prime pair not because it is prime-indexed, but simply because it is smaller.

In an attempt to remove this bias related to small numbers, we  look at prime-indexed primes versus arbitrary primes on the interval $[n, 2n)$.  
Let $Q(n)$ denote the analogous ratio to $P(n)$  in \eqref{P}   with both   probabilities evaluated on the interval $[n, 2n)$  instead of   $[1, n]$.\footnote{This $Q(n)$ test was suggested to the authors by Michael Filaseta. We are   grateful to Filaseta for sharing  invaluable advice on computing  probabilities such as these (Private communication, April 12--13, 2025).}  

\begin{result}
    We compute $Q(n)$ up to $n=500{,}000$. We find      $Q(n)$ oscillates irregularly around one. We note a tendency for $Q(n)$ to be greater than one up to   $n=80{,}000$ but not on larger intervals, where $Q(n)$ appears to approach one asymptotically as $n$ increases.  
\end{result}

We conclude from the computational experiments in Section \ref{test2} and in this subsection that Prediction \ref{cor4} may hold at small numbers,\footnote{However, we are not sure if this results from the prime indices or from probabilistic anomalies at small numbers.} but does not appear to hold as $n$ increases.


  \begin{figure}
\includegraphics[scale=.165]{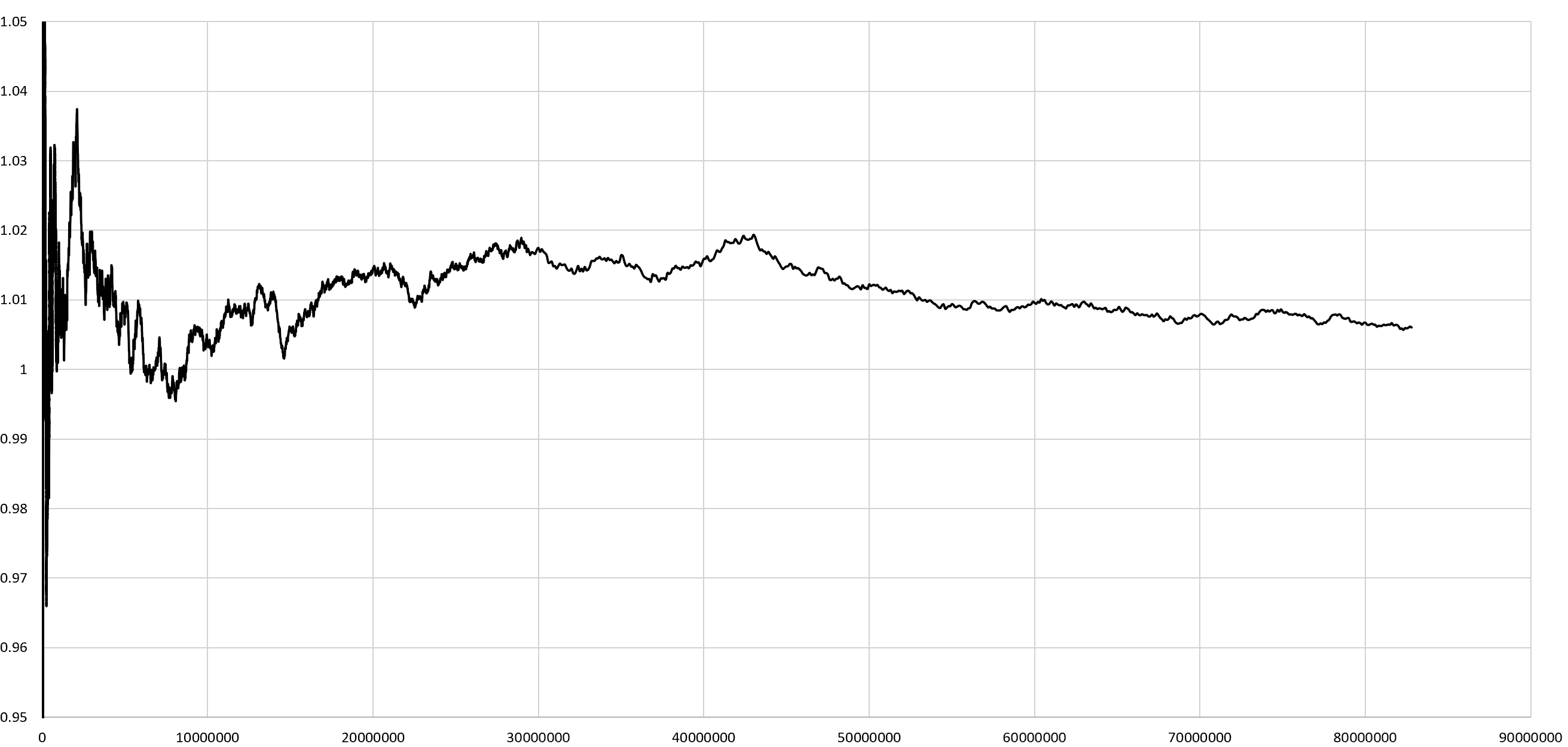}
 \caption{Plot of $P(n)$ up to $n=100{,}000{,}000$}
 \label{fig3}
  \end{figure}


\section{Improvements to the model}
\label{sect5}

\subsection{Refining the error term}

We now turn our attention from formulating a conceptual model of prime distribution to improving our computational model and its numerical estimates  for $\pi(n)$. Recall that we  define a {semiprime} to be an integer having exactly two prime factors.  Let $\pi_k(n)$ denote the number of positive integers less than or equal to $n\geq 1$   having exactly $k\geq 0$ prime factors (so   $\pi_1(n)=\pi(n)$ and  $\pi_0(n)=1$ for all $n\geq 1$).   All the integers up to $n$ are enumerated by
\begin{equation}\label{first} 
    n\  =\  1+\pi(n)+\pi_2(n)+\pi_3(n)+\dots+\pi_k(n)+\dots
\end{equation}
with only  finitely many nonzero terms, depending on $n$. Substituting $n\mapsto p_n$ in \eqref{first} gives 
\begin{equation}\label{newgap0}
p_n\  =\  1+n+\pi_2(p_n)+\pi_3(p_n)+\dots.
\end{equation}
This provides an explicit formula for the $n$th prime gap in terms of   values $\pi_k(p), p\in \mathbb P$: 
\begin{equation}\label{newgap} p_{n+1}-p_n\  =\  1+\left[\pi_2(p_{n+1})-\pi_2(p_n)\right]+\left[\pi_3(p_{n+1})-\pi_3(p_n)\right]+\dots.
\end{equation}

Now, \eqref{newgap0} and \eqref{newgap}  are not     conceptually compatible with Model 1;  the equations track factorizations of even as well as odd integers in the prime gap, while      Simplifying Assumption 1 only allows for enumerating odd semiprimes in the image of the supernorm and doubling the number. But these identities can inform   corrections to Model 1. It is an estimate due to Landau \cite{handbuch} that for $n> 1, k\geq 1$, as $n\to\infty$ we have 
\begin{equation}\label{landau}
\pi_k(n)\  
\sim\   \frac{n \left(\log \log n\right)^{k-1}}{\log n \   (k-1)!}.
\end{equation}
The $k=1$ case of \eqref{landau} is the prime number theorem. It follows for $k\geq 1$  
that\footnote{Noting   $\log p_n  \sim \log(n\log n) = \log n +\log\log n \sim \log n \sim \log(n+1)  \sim \log p_{n+1}$ as $n\to\infty$.} 
\begin{equation}\label{landau2}
\pi_k(p_{n+1})-\pi_k(p_n)
\  \sim\     \frac{\left(\log \log n\right)^{k-1}}{(k-1)!}.
\end{equation}
For fixed $n\geq 3$, this final ratio approaches zero as $k\to \infty$. Therefore, we expect integers with $k=3$ prime factors are the next most populous set in a prime gap, apart from  semiprimes.  {To improve our model, we should    model the contributions of integers with three   prime factors.} 

However, for conceptual and computational simplicity in our model, we want to avoid   considering   factorizations with  many   prime components. 
We posit a second  simplifying assumption that avoids having to perform factorizations involving more than two prime factors, based on  the following elementary observation: {\it every   integer less than $p_{n+1}$ having   three prime factors is the image of a semiprime less than $p_n$  multiplied by a prime.} We will assume for simplicity that {\it the converse also holds}, as an approximate approach to modeling the contribution of integers with three factors.

Define a {\it $k$-almost prime} to be an integer having exactly $k\geq 2$ prime factors (see e.g. \cite{Heath-Brown}). 

\begin{assumption}  
Assume for simplicity that every semiprime less than $p_n$ maps into the interval  $[2,\  p_{n+1}),\  n\geq 2,$ under multiplication by some prime number, 
and that the $3$-almost primes resulting from these products are not already enumerated by Model 1. By \eqref{landau}, as $n$ increases this yields an estimated contribution of \begin{equation}\label{asymptotic}
\pi_2(p_n)\  \sim\  \pi(p_n) \log \log p_n\  
\sim\   n\log\log n \end{equation}
integers having     three prime factors in the interval $[2, p_{n+1})$, in addition to the  odd semiprimes and their even   ``doubles''  enumerated by Model 1. 
\end{assumption}

 We   count  these integers with three  prime factors in our updated model.  {\it Simplifying Assumption 2   fills in the missing error of order $n\log\log n$   noted previously}.      Simplifying Assumption 2   likely produces an overestimate, but we have not proved this;  only semiprimes less than $p_{n+1}/2<p_n$ can   map into the interval $[2, p_{n+1})$ via  multiplication but some semiprimes might produce more than one $3$-almost prime that is less than $p_{n+1}$, through multiplication by  different   primes.  

  We    make one further simplifying assumption that is solely computational, and as such, represents an {\it ad hoc} revision -- yet it leads   to a {significantly}  more accurate   computational estimate for $\pi(n)$ that is {\it almost exact} at small numbers. One may want      to replace the sum in \eqref{newgap00} with an integral. 
 Note the   asymptotic estimate on the right side of  \eqref{newgap00} can be  expressed as      
 \begin{equation}
     p_n\  \sim\     \int_{0}^{n-1}\log t\  dt \  +\   2\gamma (n-1) .
 \end{equation}
 {This integral estimate is  simpler, and  unaffected asymptotically,   if we delete the $2\gamma (n-1)$ term.\footnote{We do not at present know why this deletion  produces better accuracy or   if it is optimal to this effect.}} 

\begin{assumption}  
Assume for simplicity   that subtracting $2\gamma (n-1)$ from the model will not hurt the reasonable accuracy of the model  at small numbers and will have negligible impact on the estimate for $p_n$ asymptotically.
\end{assumption}



Combining Simplifying Assumptions 2 and 3 above with the statement of Model 1, we posit a refined model. We   add a correction term $\left\lfloor \pi_2(p_{n-1}) - 2\gamma (n-1)  \right\rfloor$ to the formula for $p_n$, where   $\left\lfloor x \right\rfloor$ is the floor function; recall that we set  $\left\lfloor x \right\rfloor:=0$ if $x<0$ throughout this paper.   Simplifying Assumption 2 provides   number-theoretic justification for adding   the   $\pi_2(p_{n-1})\sim (n-1)\log\log (n-1)\sim n\log\log n$ term; subtracting the $2\gamma (n-1)$ term is an {\it ad hoc} correction for computational simplicity. Use of the floor function in the correction term   ensures   our outputs are   integer values. \\  


\noindent {\bf Model 2.} {\it The prime numbers $p_1, p_2, p_3,$ $\dots, $ can be modeled by the sequence having initial value $p_1=2$ and for $n\geq 2$ having the values  
$$p_{n}\  =\  1 \  +\   2\sum_{k=1}^{n-1}\left\lceil \frac{d(k)}{2}\right\rceil\  +\  \left\lfloor \pi_2(p_{n-1}) - 2\gamma (n-1) \right\rfloor.$$}

 Model 2 outputs the same  initial values  as Model 1 for the estimated sequence of primes, viz. $2, 3, 5, 7, 11, 13, 17, 19, 23, 27$,  but provides a better model of prime distribution as $n$ increases. The reader can confirm from Table \ref{table4}     that  Model 2 gives a {significantly} better numerical approximation to $\pi(n)$ than   Model 1 does; in fact, estimates from Model 2 are close to exact at smaller numbers. However, testing Model 2 numerically  as $n$ increases becomes increasingly cumbersome: one has to check factorizations for every integer up to $p_{n-1}$, which is computationally expensive and   goes against our goal of having an explicit formulation for the model.\footnote{We note for clarity that in our computations  we use actual primes $p_{i-1}$ inside the Model 2 correction term, not the estimated sequence produced by the model; the Model 2* correction term approximates both cases asymptotically.}

At large numbers,  Landau's  formula \eqref{landau}  approximates $\pi_2(n)$. We will use the relation  $\pi_2(p_{i})\sim i\log\log i,\  i\geq 2$, by  \eqref{asymptotic} to produce a version of Model 2 that is more computationally efficient.  Introducing this  asymptotic term  potentially introduces a  source of    error, but  in practice turns out to   yield  surprisingly good estimates for $\pi(n)$. \\

\noindent {\bf Model 2* (computational version).} {\it The prime numbers $p_1, p_2, p_3,$ $\dots, $ can be modeled by the sequence having initial values $p_1=2,\  p_2=3$,   and for $n\geq 3$  having the values  
$$p_{n}\  =\  1 \  +\   2\sum_{k=1}^{n-1}\left\lceil \frac{d(k)}{2}\right\rceil\  +\  \left\lfloor (n-1)\cdot\left(\log \log(n-1)-2\gamma \right)\right\rfloor.$$}

\begin{remark}
   Models 2 and 2* can be simplified computationally using equation \eqref{simplify}.
\end{remark}

One can    count the number of primes that Model 2* predicts up to any positive integer $n$. Model 2* still outputs the initial values $2, 3, 5, 7, 11, 13, 17, 19, 23, 27$, and produces an improvement on Model 2 in terms of its approximation to $\pi(n)$ as $n$ increases. 

{We refer the reader to Table \ref{table4} to see   that    Model 2* yields quite accurate estimates for $\pi(n)$.}

\subsection{Comparison of relative errors in the models}  From Figures \ref{fig4}, \ref{fig5}, and \ref{fig6}, one can compare the  relative errors in the estimates for $p_n$ provided by Models 1, 2, and 2*, respectively.\footnote{Relative error is computed by   ${|v_{\operatorname{est}}-v_{\operatorname{act}}|}/{v_{\operatorname{act}}}$ with $v_{\operatorname{est}}\geq 0$    the estimated value and $v_{\operatorname{act}}>0$   the actual value.} As the graphs indicate, at $n=100{,}000$, the  relative error in Model 1 is about $0.1$, the  relative  error in Model 2 is about $0.015$, and the relative  error in Model 2*  is below $0.005$.

\subsection{Predictions from Model 2}\label{model2pred}

With   additional correction terms whose necessity largely follows from the premises of Model 1 (but   that also include  an {\it ad hoc} adjustment), Models 2 and 2* give surprisingly good estimates for $\pi(n)$ at small numbers.  The second-order corrections do not impact the   models' compatibility with the prime number theorem  asymptotically. As Table \ref{table4} displays, Models 2 and  2* are both improvements on Model 1 from a computational perspective.

However, Model 2 requires one to enumerate semiprimes, whereas Model 2* together with the right-hand side of  \eqref{simplify} provides a fully  analytic   model of primes requiring no number-theoretic calculations beyond division. Since it involves simpler computations and is  evidently    more accurate in its estimates of $\pi(n)$ as $n$ increases, we consider Model 2* to be a better model of primes.

We note that the correction terms in   Models 2  and  2* {\it interfere} with one significant prediction of Model 1: eventually, the correction term step functions  
are strictly increasing as $n$ increases. The strictly increasing correction term prevents  the  prime gaps in Models 2 and 2* from getting too small as $n\to \infty$.\footnote{The minimum size of the $n$th prime gap should grow  roughly like $\log \log n$, under Simplifying Assumption 2.} {\it Models 2 and 2* do not model twin primes at large numbers.}  


\subsection{Critique of Models 2 and   2*}  We record concerns that might be addressed in the future. 

While Models 2 and 2* remain    in line with the main term of the prime number theorem, the missing contribution of partitions (and   factorizations) of lengths greater than three   creeps up, causing the models' estimates to become farther from exact as $n$ increases. Further corrections, either  refinements of our   assumptions or new ideas entirely, might produce models of higher accuracy.


Recalling that Simplifying Assumption 2 represents an expected overestimate of the prime gap length,  then we expect Model 2  produces an   {\it underestimate} for $\pi(n)$; Table \ref{table4}  supports this as $n$ increases.  Furthermore, the approximation of  $\pi_2(p_{i-1})$  by Landau's asymptotic  \eqref{landau}  in Model 2* introduces     inaccuracies at  values $i\geq 3$  small enough that the asymptotic does not provide a good approximation, and  appears to lead to an {\it overestimate} for $\pi(n)$ in Model 2* by inspection of Table \ref{table4}.   Even so, predictions from   Model 2* are generally better than those from Model 2;  this appears to be   a ``happy accident'' resulting from the first-order approximation \eqref{landau}.  Tenenbaum   \cite{Tenenbaum1}  provides an asymptotic series expansion for $\pi_k(n)$; the $k=2$ case   gives 
\begin{equation}\label{Ten}
    \pi_2(n)\  \sim\  \sum_{j=1}^{\infty}(j-1)!\  \frac{n \log\log n}{(\log n)^j} \  +\  \sum_{j=1}^{\infty}C_{j-1}\frac{n}{(\log n)^{j}}
\end{equation}
as $n\to \infty$, where the $C_i$ are real  constants; it is proved in \cite{MM} that $C_0=0.2614\dots$ is the  {\it Meissel--Mertens constant}. 
Substituting $n \mapsto p_{n}\sim n\log n$,    the $j=1$ summands in \eqref{Ten} give  
\begin{equation}\label{underestimate}
    \pi_2(p_{n})-n\log\log n\  \sim\    C_0 n.
\end{equation}
Thus $(i-1)\log \log (i-1)$ is   an {\it underestimate} for $\pi_2(p_{i-1})$, evidently leading to an {overestimate} for $\pi(n)$ in Model 2*. Indeed, by inspection of  the limited data in Table \ref{table4} and consideration  of  these differing estimates for $\pi(n)$, one can  conjecture   bounds on $\pi(n)$ as $n$ increases from Models 1, 2, and 2* in the  cases shown.  

\begin{conjecture}\label{conj}
Let ${\pi}^1(n)$ denote the estimate for $\pi(n)$ derived from Model 1,   let ${\pi}^2(n)$ denote the estimate  derived from Model 2, and let ${\pi}^{*}(n)$ denote the estimate derived from Model 2*. 
Then as $n\to\infty$ we have 
$${\pi}^2(n)\  \leq\  \pi(n)\  \leq\   {\pi}^{*}(n)\  \leq\   {\pi}^{1}(n).$$
\end{conjecture}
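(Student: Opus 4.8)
The plan is to convert the three claimed inequalities on $\pi(n)$ into inequalities on the modeled prime values, which can then be decided by classical asymptotic expansions. Write $q_m^{(1)},q_m^{(2)},q_m^{(*)}$ for the $m$th modeled prime produced by Models 1, 2, and 2*, so that $\pi^{1}(n)=\#\{m\ge 1:q_m^{(1)}\le n\}$ and likewise for $\pi^{2}$, $\pi^{*}$. First I would record the elementary inversion principle: if $a_m\le b_m$ for all $m\ge M$, then for every $n$ large enough that $a_m\le n$ holds for the finitely many $m<M$, one has $\#\{m:b_m\le n\}\le\#\{m:a_m\le n\}$ (this needs no monotonicity, only the inclusion $\{m\ge M:b_m\le n\}\subseteq\{m\ge M:a_m\le n\}$). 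Applying it three times, it suffices to show, for all sufficiently large $m$,
\[
q_m^{(1)}\ \le\ q_m^{(*)}\ \le\ p_m\ \le\ q_m^{(2)}.
\]
The leftmost inequality is immediate, because $q_m^{(*)}$ and $q_m^{(2)}$ each equal $q_m^{(1)}$ plus a modified floor, which is $\ge 0$ under the paper's convention.

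For the other two I would assemble the needed expansions. By \eqref{simplify} and the classical estimate $\sum_{k\le N}d(k)=N\log N+(2\gamma-1)N+O(\sqrt N)$ \cite{Apostol1}, one gets $q_m^{(1)}=(m-1)\log(m-1)+(2\gamma-1)(m-1)+O(\sqrt m)$. Cipolla's higher-order asymptotic for the $m$th prime, rewritten in terms of $m-1$, gives
\[
p_m=(m-1)\log(m-1)+(m-1)\log\log(m-1)-(m-1)+\frac{m\,(\log\log m-2)}{\log m}+O\!\Big(m\Big(\tfrac{\log\log m}{\log m}\Big)^{2}\Big).
\]
Substituting $n=p_{m-1}=(m-1)\log(m-1)(1+o(1))$ into Tenenbaum's asymptotic series \eqref{Ten} for $\pi_2$ and using $C_0=0.2614\dots>0$ \cite{MM} yields the quantitative form of \eqref{underestimate},
\[
\pi_2(p_{m-1})=(m-1)\log\log(m-1)+C_0(m-1)+O\!\Big(\tfrac{m\log\log m}{\log m}\Big).
\]

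Then, for $m$ large enough that the floor's argument in Model 2 is positive, $q_m^{(2)}=q_m^{(1)}+\pi_2(p_{m-1})-2\gamma(m-1)+O(1)$, and substituting the three expansions the $\log$- and linear-in-$(m-1)$ terms combine to give $q_m^{(2)}-p_m=C_0(m-1)+O(m\log\log m/\log m)$, which is positive for all large $m$ since $C_0>0$; this settles $p_m\le q_m^{(2)}$, hence $\pi^{2}(n)\le\pi(n)$. Likewise $q_m^{(*)}=q_m^{(1)}+(m-1)\log\log(m-1)-2\gamma(m-1)+O(1)=(m-1)\log(m-1)+(m-1)\log\log(m-1)-(m-1)+O(\sqrt m)$, so on subtracting from $p_m$ the entire main block cancels and $p_m-q_m^{(*)}=\frac{m(\log\log m-2)}{\log m}+O(\sqrt m)+O(m(\log\log m/\log m)^2)$, whose leading term is positive once $\log\log m>2$ and dominates both error terms; this settles $q_m^{(*)}\le p_m$, hence $\pi(n)\le\pi^{*}(n)$. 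Combining the three inequalities with the inversion principle gives $\pi^{2}(n)\le\pi(n)\le\pi^{*}(n)\le\pi^{1}(n)$ for all sufficiently large $n$.

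The hard part will be the inequality $q_m^{(*)}\le p_m$. In the comparison with Model 2 there is a full linear cushion $C_0(m-1)$, so only a crude asymptotic is needed; but here the leading blocks $(m-1)\log(m-1)$ and $(m-1)\log\log(m-1)$ cancel exactly, and the sign is decided solely by Cipolla's second-order term $m(\log\log m-2)/\log m$, which must be defended against the divisor error (only $O(\sqrt m)$ classically), Cipolla's own remainder, and the floor slop — a genuinely second-order estimate, for which identifying an explicit threshold $m_0$ beyond which it holds would likely require a short numerical check. A secondary difficulty is making the quantitative form of \eqref{underestimate} fully rigorous: that requires Tenenbaum's series \eqref{Ten} with an honest remainder together with a careful substitution $n=p_{m-1}$, in place of the formal manipulation underlying \eqref{Ten}--\eqref{underestimate}.
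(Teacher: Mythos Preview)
The statement you address is labeled a \emph{Conjecture} in the paper, not a theorem; there is no proof to compare against. The paragraph following the conjecture offers only heuristic support: it notes that $\pi^{*}(n)\le\pi^{1}(n)$ follows from nonnegativity of the correction term (the paper's modified floor), that $\pi^{2}(n)\le\pi^{*}(n)$ follows from \eqref{underestimate}, and that $\pi^{2}(n)\le\pi(n)\le\pi^{1}(n)$ is merely \emph{suggested} by Simplifying Assumptions 1 and 2. Crucially, the authors write that they ``cannot, however, see a reason why $\pi^{*}(n)$ should always be greater than $\pi(n)$,'' and they invoke Littlewood's oscillation theorem to caution against trusting such empirical inequalities.

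Your approach goes well beyond the paper's discussion: you outline an actual proof where the paper only conjectures. The key idea the authors miss is precisely what you flag as the ``hard part''---comparing $q_m^{(*)}$ with $p_m$ at the level of Cipolla's second-order term $m(\log\log m-2)/\log m$ after the blocks $(m-1)\log(m-1)$, $(m-1)\log\log(m-1)$, and $-(m-1)$ cancel exactly. Your computation is correct: both the Dirichlet divisor error $O(\sqrt m)$ and Cipolla's own remainder $O\big(m(\log\log m/\log m)^2\big)$ are eventually dominated by $m(\log\log m-2)/\log m$, so $q_m^{(*)}\le p_m$ for all large $m$, and your inversion principle yields $\pi(n)\le\pi^{*}(n)$. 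Your argument for $p_m\le q_m^{(2)}$ via the linear cushion $C_0(m-1)$ with $C_0>0$ is likewise sound, subject (as you correctly note) to a quantitative form of \eqref{Ten} with an honest error term; such a form is available in \cite{Tenenbaum1} and \cite{MM}. In short, modulo the rigor caveats you already identify, you have sketched a valid proof of what the paper leaves open.
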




Simplifying Assumptions   1 and 2 suggest ${\pi}^2(n) \leq \pi(n)  \leq  {\pi}^{1}(n)$. As $n$ increases, we know that  $\pi^1(n)$       exceeds ${\pi}^{2}(n)$ and ${\pi}^{*}(n)$ due to their   correction terms, $\pi^2(n)\sim\pi^*(n)$ by \eqref{landau}, and ${\pi}^2(n) \leq {\pi}^{*}(n)$ follows from   \eqref{underestimate}; we cannot, however, see a reason why $\pi^*(n)$ should always be greater than $\pi(n)$ while ${\pi}^2(n) \leq \pi(n)$.  Table \ref{table4} also suggests $n/\log n \leq \pi(n)\leq \operatorname{li}(n)$ as $n\to\infty$, but Littlewood proves  $\operatorname{li}(n)-\pi(n)$    changes sign infinitely often \cite{Littlewood}, so we do not put too much faith in this empirical conjecture.

As we noted in Section \ref{model2pred}, Models 2 and 2* do not model twin primes at large numbers. 
To better model small prime gaps,  perhaps one can insert a factor $\phi(n)\in [0, 1]$ in the correction terms, e.g.  $\left\lfloor \phi(n)\cdot\left(\pi_2(p_{n-1}) - 2\gamma (n-1)\right) \right\rfloor$, such that $\phi(n)$ is closer to zero when $d(n-1)$ is smaller and is closer to one when $d(n-1)$ is greater; for example,   $\phi(n)=1-2/d(n-1)$. 
But this is not assured to produce a better model of  prime numbers and may adversely impact   estimates for $\pi(n)$. 


Finally, we make an aesthetic critique. Model 2 and Model 2*  produce sequences of integers that {fail} to imitate the primes in a  key way: the updated  models predict  {\it arbitrary primes are sometimes   even} as the correction terms 
can be odd or even. 
Noting that $2$ is the only even prime, we suggest a minor refinement of Models 2 and 2*:  replacing $\lfloor x \rfloor$ with a variant $\lfloor x \rfloor^*$ such that $\lfloor x \rfloor^*=\lfloor x \rfloor$ if $\lfloor x \rfloor$ is even, and $\lfloor x \rfloor^*=\lfloor x \rfloor + 1$ if $\lfloor x \rfloor$ is odd. This  ensures all     modeled prime gaps are even and all  predicted prime numbers are odd, and should not significantly affect estimates. 

\section{Concluding remarks}

The authors posit Model 1 as a conceptual model that attempts to explain aspects of prime distribution based on elementary assumptions, which we derive from observations about  the  partition norm and supernorm. The model not only is compatible with major observations such as the prime number theorem and   twin prime conjecture, but also predicts   local fluctuations in prime gaps that the prime number theorem does not. We treat these   predictions about  fluctuations as   representing probabilistic statements and give them a cursory check, with mixed findings. 
By contrast, Model 2 and Model 2* are proposed as computational models that  introduce simple  corrections to Model 1 to yield estimates for $\pi(n)$ that are surprisingly good. 

Because it is compatible with     observations of primes and suggestive   of  new phenomena that can be tested, we hope     the approach we present  provides a useful complement  to  probabilistic models.

  \begin{figure}
\includegraphics[scale=.20]{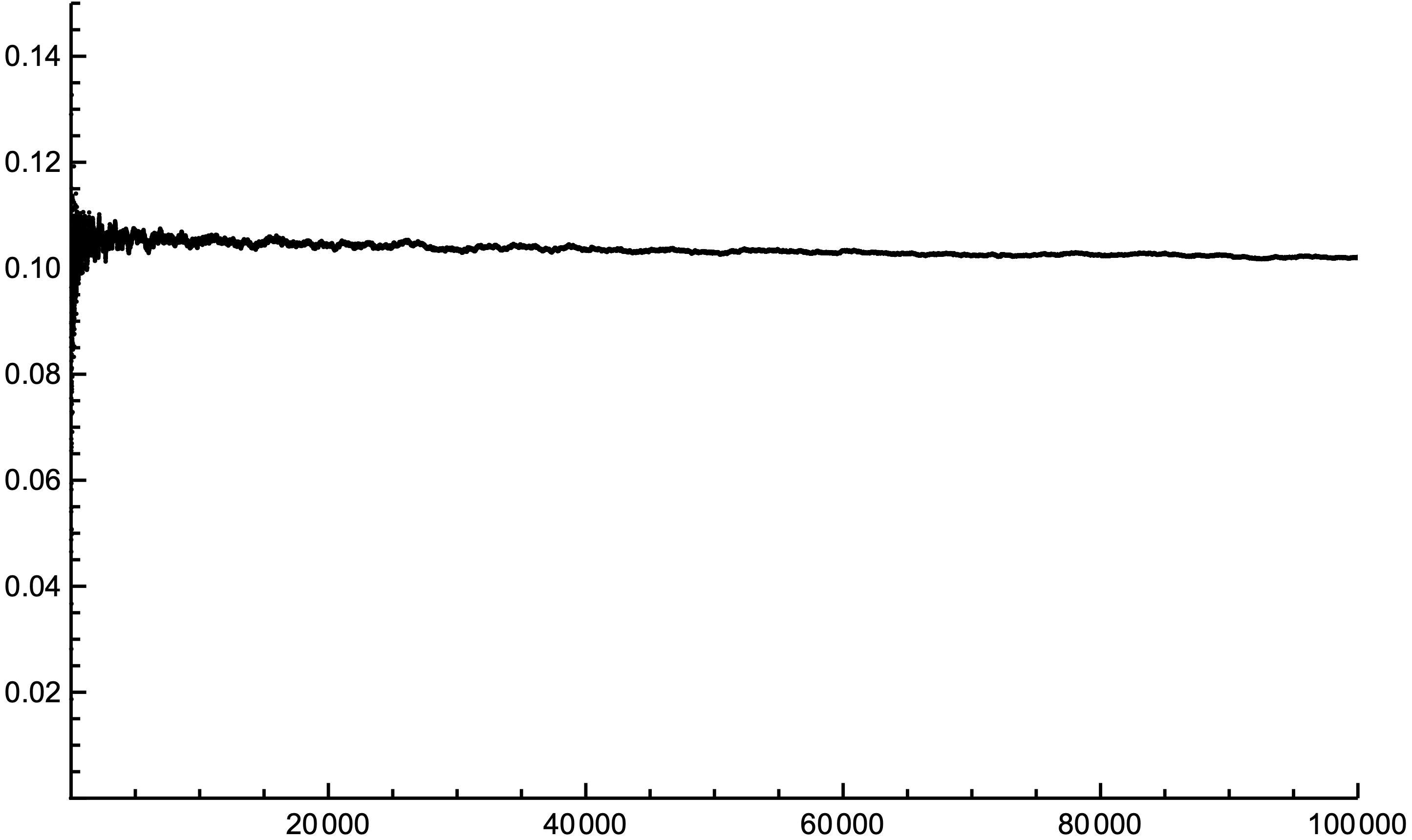}
 \caption{Relative error in the Model 1 estimate for $p_n$ up to $n=100{,}000$}
 \label{fig4}
 \vskip.15in \end{figure}

  \begin{figure}
\includegraphics[scale=.2]{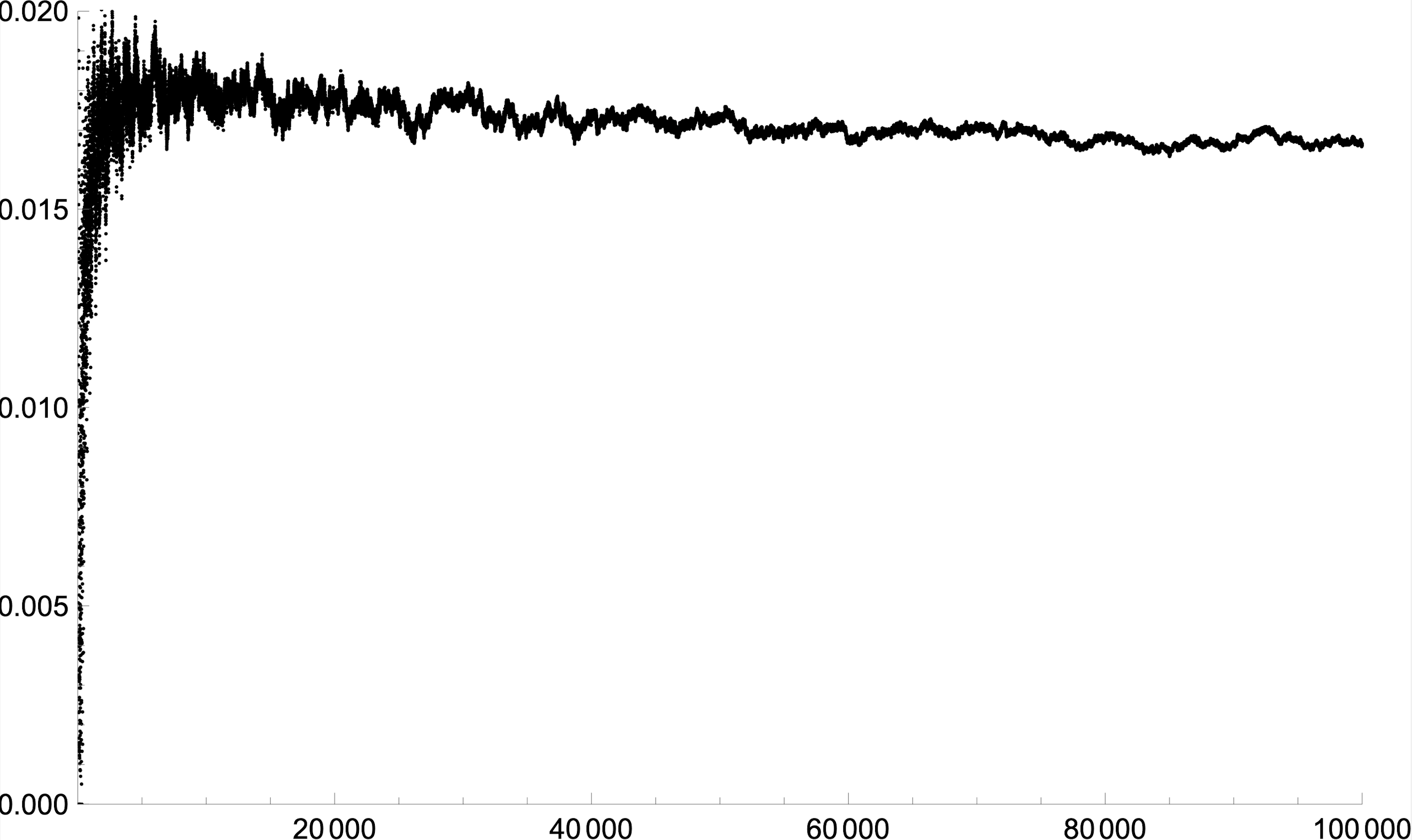}
 \caption{Relative error in the Model 2 estimate for $p_n$ up to $n=100{,}000$}
 \label{fig5}\vskip.15in
  \end{figure}

  \begin{figure}
\includegraphics[scale=.211]{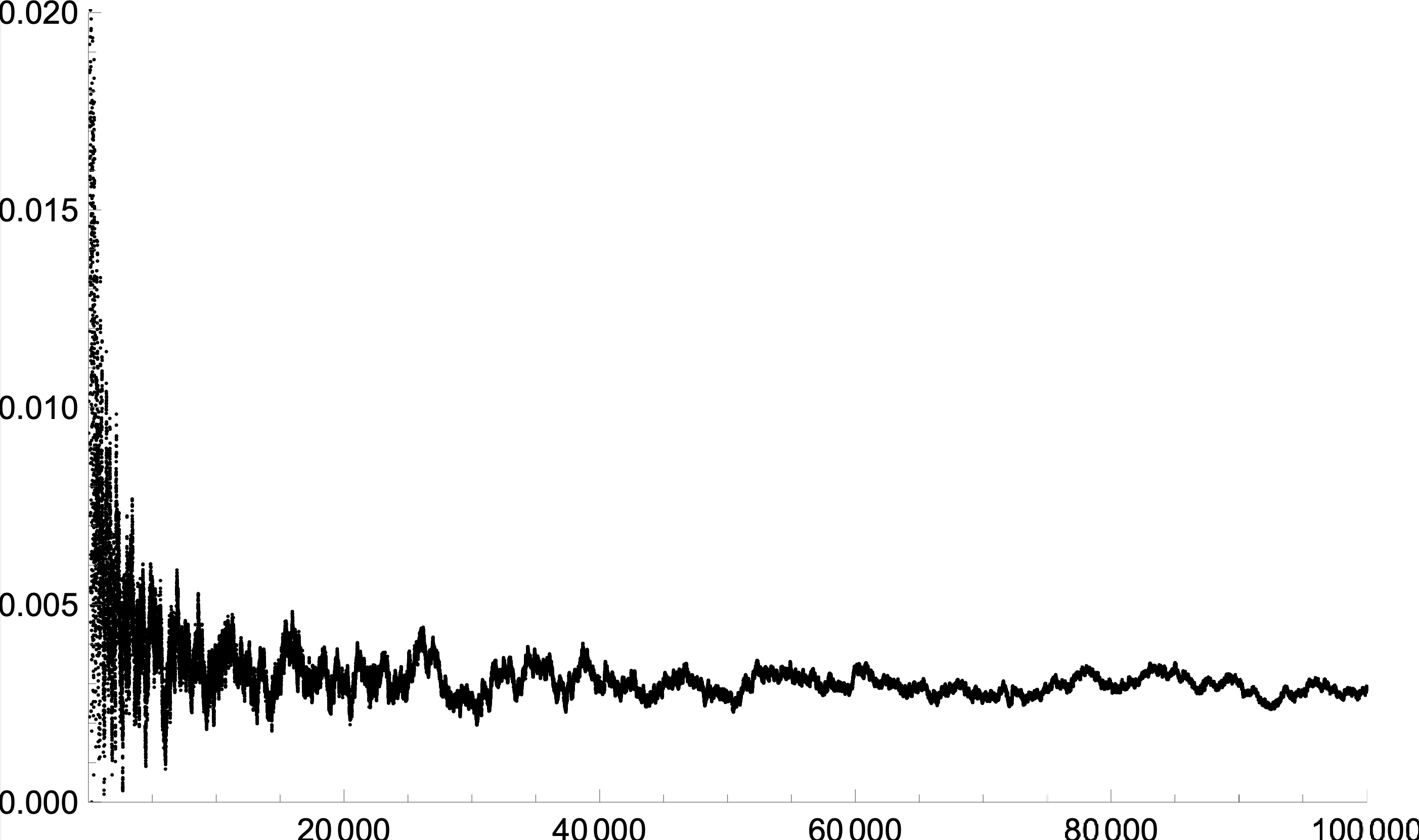}
 \caption{Relative error in the Model 2* estimate for $p_n$ up to $n=100{,}000$}
 \label{fig6}
  \end{figure}


\section*{Acknowledgments}

For calculations we use Wolfram Mathematica
computer algebra system and Python programming language. The authors are  very  thankful to Eli DeWitt     and Alexander Walker   for computer programming  that advanced our project, and to Maxwell Schneider for programming consultation. Furthermore, we are    grateful to  Krishnaswami Alladi, George E. Andrews,  William Craig, Philip Cuthbertson, Daniel Desena, Michael Filaseta, Andrew Granville,  Timothy Havens, Abhimanyu Kumar, Jeffrey Lagarias, David Leep, Jim McIntyre,  Paul Pollack, Ken Ono, Meenakshi Rana,  James Sellers,  Steven Shawcross, Andrew V. Sills and Hunter Waldron for conversations that informed this paper,  and to Ivan Ramirez Zuniga for advice about scientific modeling. In particular, we thank Jeffrey Lagarias and Paul Pollack for offering well-placed suggestions with respect to an earlier draft of this work, Michael Filaseta for invaluable notes on statistical interpretations of our data, and Eli DeWitt for checking our numerical computations prior to publication.    We also thank  the anonymous referee, who made     useful suggestions.




\end{document}